\newtheorem*{conj*}{Conjecture}
\newtheorem{theorem}{Theorem}[section]
\theoremstyle{definition}
\theoremstyle{plain}
\newtheorem{lemma}[theorem]{Lemma}
\newtheorem{prop}[theorem]{Proposition}
\newtheorem{conjecture}{Conjecture}
\newtheorem{definition}[theorem]{Definition}
\newtheorem{corollary}[theorem]{Corollary}
\newcommand{\legendre}[2]{\genfrac{(}{)}{}{}{#1}{#2}}
\renewcommand{\pmod}[1]{\,\,({\rm mod}\,\,{#1})}
\numberwithin{equation}{section}
\newtheoremstyle{example}
  {\topsep}   % ABOVESPACE
  {\topsep}   % BELOWSPACE
  {\normalfont}  % BODYFONT
  {0pt}       % INDENT (empty value is the same as 0pt)
  {\bfseries} % HEADFONT
  {.}         % HEADPUNCT
  {5pt plus 1pt minus 1pt} % HEADSPACE
  {}          % CUSTOM-HEAD-SPEC
\theoremstyle{example}
\def\({\left(}
\def\){\right)}
\def\lp{\left(}
\def\rp{\right)}
\title{On Ramanujan-Type Congruences for Multiplicative Functions}
\author{William Craig}
\address{Department of Mathematics\\University of Virginia\\ Kerchof Hall 112\\ 141 Cabell Drive \\ Charlottesville, VA 22903}
\email{wlc3vf@virginia.edu}
\author{Mircea Merca}
\address{Department of Mathematics\\University of Craiova\\Craiova, 200585 Romania}
\email{mircea.merca@profinfo.edu.ro}
\keywords{Congruences, Ramanujan-type Congruences, Multiplicative Functions}
\subjclass[2020]{11A25, 11P83}
\begin{document}

\maketitle

\begin{abstract}
The study of Ramanujan-type congruences for functions specific to additive number theory has a long and rich history. Motivated by recent connections between divisor sums and overpartitions via congruences in arithmetic progressions, we investigate the existence and classification of Ramanujan-type congruences for functions in multiplicative number theory.
\end{abstract}

\section{Introduction}

Let $p(n)$ denote the number of partitions of the positive integer $n$, that is, the number of ways to represent $n$ as a sum of non-increasing positive integers. The function $p(n)$ has a rich history in combinatorics and number theory, including both analytic and arithmetic structure. Here, we focus on the body of work stemming from Ramanujan's brilliant observation \cite{Ramanujan1,Ramanujan3} that the function $p(n)$ satisfies certain congruence relations, given by
\begin{align*}
p(5n+4) \equiv 0 \pmod{5}, \\
p(7n+5) \equiv 0 \pmod{7},
\end{align*}
and
\begin{align*}
p(11n+6) \equiv 0 \pmod{11}.
\end{align*}
Ramanujan also knew congrunces modulo higher powers of primes, for instance $p(25n + 24) \equiv 0 \pmod{25}$ \cite{Ramanujan2}. Ramanujan also made the striking conjecture that there are ``no equally simple properties for any moduli involving primes other than these three'' \cite{Ramanujan2}, that is, the only congruences of the form $p(\ell n + b) \equiv 0 \pmod{\ell}$ for $\ell$ prime are the three identified by Ramanujan. This conjecture was proven by Ahlgren and Boylan \cite{AhlgrenBoylan}, and so congruences of the form $p(\ell n + b) \equiv 0 \pmod{\ell}$ are fully classified. Such classification results are natural and interesting questions in the broader theory of additive number theory. Here, we focus on the existence and classification of similar congruences for functions other than $p(n)$. To fix terminology, if $f : \mathbb{N} \to \mathbb{Z}$ is any arithmetic function, we say that $f(n)$ satisfies a {\it Ramanujan-type congruence} modulo a prime power $p^k$ if there is some arithmetic progression $An+B$ such that for all $n \geq 0$, we have
\begin{align*}
f(An+B) \equiv 0 \pmod{p^k}.
\end{align*}
The theory of Ramanujan-type congruences of arithmetic functions now reaches well beyond $p(n)$ to many related combinatorial functions, which include $k$-colored partitions \cite{BGRT} and overpartitions, to which we now turn our attention. Recall that an overpartition of the positive integer $n$ is an ordinary partition of $n$ where the first occurrence of parts of each size may be overlined \cite{Corteel}, and let let $\overline p(n)$ be the number of overpartitions of $n$, as in \cite{Corteel}. For example, the overpartitions of 3 are $(3), (\overline{3}), (2,1), (\overline{2},1), (2,\overline{1}), (\overline{2},\overline{1}), (1,1,1)$ and $(\overline{1},1,1)$, and so $\overline p(3) = 8$. In analogy with $p(n)$, many authors have found various Ramanujan-type congruences for the overpartition function $\overline p(n)$, using tools such as dissection formulas for theta functions \cite{Fortin,Hirschhorn} and the theory of modular forms \cite{Chen15}. We note that first congruences of this type have modulus a power of 2, as in \cite{Fortin}.

Recently, the second author \cite{Merca19a} provided a means for completely characterizating Ramanujan-type congruences modulo $16$ for $\overline{p}(n)$ by relating $\overline p(n)$ to the number of divisors of $n$. For any integer $k \geq 0$, we use the standard notation
\begin{align*}
\sigma_k(n) := \sum_{d | n} d^k
\end{align*}
to denote the sum of $k$th-powers of positive divisors of $n$, so the number of divisors of $n$ is given by $\sigma_0(n)$. By the proofs of \cite[Theorems 1.3 and 1.4]{Merca19a}, for $r = 3, 5$ and $n \geq 0$, we have
$$\overline{p}(8n+r) \equiv 0 \pmod{16}\quad \iff \quad \sigma_0(8n+r) \equiv 0 \pmod{4}.$$
If we let $\overline p_o(n)$ denote the number of overpartitions of $n$ into odd parts, the proof of the second authors also shows in Theorem 1 of \cite{Merca21} that for $r = 1,3$ and $n \geq 0$ we have
$$\overline{p_o}(8n+r) \equiv 0 \pmod{8}\quad \iff \quad \sigma_0(8n+r) \equiv 0 \pmod{4}.$$
These results may be viewed as steps towards classifying all Ramanujan-type congruences for overpartitions, particularly because the divisibility properties of multiplicative functions are more directly accessible with elementary methods than those of functions defined in terms of partitions. Motivated by these connections between overpartitions and divisor sums, the second author made the following conjecture in \cite{Merca22}.

\begin{conjecture}[Conjecture 7 of \cite{Merca22}] \label{CJ5}
	If $\sigma_0(An + B) \equiv 0 \pmod {2^{k}}$ holds for all $n\geqslant 0$, then 
	there is a sequence of prime numbers $p_1\leqslant p_2\leqslant \ldots \leqslant p_{k-1}$ such that
	$A$ is divisible by $(p_1p_2\cdots p_{k-1})^2$ and $B$ 
	is divisible by $p_1p_2\cdots p_{k-1}$.
\end{conjecture}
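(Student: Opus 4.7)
The plan is to extract a single structural inequality from the divisibility hypothesis via Dirichlet's theorem on primes in arithmetic progressions, and then read off the required primes combinatorially. Set $d=\gcd(A,B)$ and write $A=d\alpha$, $B=d\beta$ with $\gcd(\alpha,\beta)=1$, so that $An+B=d(\alpha n+\beta)$. Let $v_p$ denote the $p$-adic valuation, and partition the prime divisors of $d$ into
\[
S=\{p\mid d\colon p\mid\alpha\},\qquad T=\{p\mid d\colon p\nmid\alpha\}.
\]
Two elementary observations govern the argument: first, if $p\in S$ then $v_p(\alpha n+\beta)=0$ identically (because $p\mid\alpha$ and $p\nmid\beta$), and moreover $v_p(A)>v_p(B)=v_p(d)$; second, if $p\in T$ then $v_p(\alpha n+\beta)$ can be made any prescribed non-negative integer by restricting $n$ to a suitable residue class modulo a power of $p$.

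Fix nonnegative integers $(f_p)_{p\in T}$, let $F=\prod_{p\in T}p^{f_p}$, and note $\gcd(F\beta,\alpha)=1$. By Dirichlet's theorem there exist arbitrarily large primes $m\equiv F^{-1}\beta\pmod{\alpha}$; for such $m$ the integer $n=(Fm-\beta)/\alpha\geq 0$ satisfies $\alpha n+\beta=Fm$ with $\gcd(m,d)=1$. Hence
\[
An+B=\Bigl(\prod_{p\in S}p^{v_p(B)}\Bigr)\Bigl(\prod_{p\in T}p^{v_p(d)+f_p}\Bigr)m,
\]
so that
\[
\sigma_0(An+B)=2\prod_{p\in S}\bigl(v_p(B)+1\bigr)\prod_{p\in T}\bigl(v_p(d)+f_p+1\bigr).
\]
The hypothesis $2^k\mid\sigma_0(An+B)$ forces
\[
\sum_{p\in S}v_2\bigl(v_p(B)+1\bigr)+\sum_{p\in T}v_2\bigl(v_p(d)+f_p+1\bigr)\geq k-1
\]
for every $(f_p)_{p\in T}$; choosing each $f_p$ with opposite parity to $v_p(d)$ kills the $T$-sum and yields the clean structural inequality $\sum_{p\in S}v_2(v_p(B)+1)\geq k-1$.

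For the construction, set $e_p:=\lceil v_p(B)/2\rceil$ for each $p\in S$. Then $e_p\leq v_p(B)$, and $2e_p\leq v_p(B)+1\leq v_p(A)$ where the last inequality is the defining property of $S$. The elementary bound $\lceil m/2\rceil\geq v_2(m+1)$ for $m\geq 0$ (verified by writing $m+1=2^t u$ with $u$ odd and observing $2^{t-1}u\geq t$) combined with the structural inequality gives $\sum_{p\in S}e_p\geq k-1$. Enumerating any sub-multiset of size $k-1$ of $\bigsqcup_{p\in S}\{p,\ldots,p\}$ (with $p$ repeated $e_p$ times) as $p_1\leq p_2\leq\cdots\leq p_{k-1}$ then produces $p_1\cdots p_{k-1}\mid B$ and $(p_1\cdots p_{k-1})^2\mid A$, as required.

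The main obstacle is the careful execution of the Dirichlet step: one must verify that a single application of the theorem, combined with CRT bookkeeping of the $p$-adic valuations over $p\in S\cup T$, really produces $\sigma_0(An+B)$ in the stated product form with no cross-contributions. Once this is set up, the remainder is an elementary manipulation of the inequality $\lceil m/2\rceil\geq v_2(m+1)$, and the case $k=1$ is handled vacuously since the empty product of primes divides every integer.
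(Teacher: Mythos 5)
Your argument is correct in substance and, once unwound, follows the same strategy as the paper's: split the primes of $G=\gcd(A,B)$ according to whether they divide $A'=A/G$, use Dirichlet's theorem to make $A'n+B'$ a large prime (contributing only a single factor of $2$ to $\sigma_0$), drive the contribution of the primes in your set $T$ down to nothing by prescribing their valuations, and then read the required $k-1$ primes off the fixed part supported on $S$. The paper packages steps one through three as a general structure theorem for $V_p(A,B;f)$ valid for any multiplicative $f$ (its $\nu_p(f(G'))$, $U_p$ and $M_p$ terms are exactly your $S$-sum, $T$-sum and Dirichlet prime), and then specializes; you inline everything for $\sigma_0$, which makes the proof self-contained at the cost of generality. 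Your final combinatorial step is actually a bit more careful than the paper's: the exponents $e_p=\lceil v_p(B)/2\rceil$ together with the bound $\lceil m/2\rceil\geq v_2(m+1)$ give a clean, verifiable reason why $k-1$ primes can be extracted with $P\mid B$ and $P^2\mid A$, where the paper only asserts the count of prime divisors with multiplicity. One genuine (but easily repaired) slip: to kill the $T$-sum you need $v_p(d)+f_p+1$ to be \emph{odd}, i.e.\ $f_p$ must have the \emph{same} parity as $v_p(d)$ (for instance $f_p=v_p(d)$ works); choosing $f_p$ of opposite parity, as you wrote, makes $v_p(d)+f_p+1$ even and inflates the sum rather than annihilating it. With that parity corrected, the proof is complete.
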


Because $\sigma_0(n)$ is a multiplicative function, this conjecture motivates the question of identifying all Ramanujan-type congruences for multiplicative functions. As stated this question is far too broad to be meaningfully resolved in any explicit way. However, there is a sense in which the fully general problem may be answered. Namely, since a multiplicative function is defined by its values at prime powers, the problem boils down to understanding how the divisibility properties of $f(n)$ at prime powers intersect with arithmetic progressions. Our first main objective is to make this relationship explicit.

In order to state our main result, we now fix some convenient notation which we will use throughout. For a fixed arithmetic progression $An+B$, we shall always define $G := \gcd(A,B)$, $A' := \frac{A}{G}$, and $B' := \frac{B}{G}$. We shall also denote by $G'$ the maximal divisor of $G$ such that $G' | A'$. For any prime $p$, we also let $\nu_p(m)$ denote the usual $p$-adic valuation, that is $\nu_p(m)$ is the multiplicity of $p$ as a factor of $m$. With this notation, our main theorem may be stated as follows.

\begin{theorem} \label{MAIN}
Let $f(n)$ be a integer-valued multiplicative function, $p$ any prime and $A, B > 0$ integers. Then the congruence
\begin{align*}
    f\lp An+B \rp \equiv 0 \pmod{p^k}
\end{align*}
holds for all $n \geq 0$ if and only if
\begin{align*}
    k \leq \nu_p\lp f(G') \rp + \sum_{\substack{q | G \\ q \centernot | A'}} U_p\lp A', B', \nu_q(G), q; f \rp + M_p\lp A', B', G; f \rp,
\end{align*}
where sums are taken over primes $q$, 
\begin{align} \label{U Definition}
U_p\lp A,B,a;q;f \rp := \min\limits_{n \geq 0} \nu_p\lp f\lp q^{a + \nu_q\lp An+B \rp} \rp \rp,
\end{align}
and
\begin{align} \label{M Definition}
M_p\lp A', B', C; f \rp := \min\limits_{n \geq 0} \sum_{q \centernot | C} \nu_p\lp f\lp q^{\nu_q(An+B)} \rp \rp.
\end{align}
\end{theorem}

Although the theorem uses a lot of notation, each term has a very explicit meaning which can be seen from their definitions. The value $\nu_p\lp f(G') \rp$ encapsulates contributions from factors of $An+B$ which always divide $An+B$ with the same order, $U_p$ encapsulates contributions coming from factors which always divide $An+B$ but to different orders for different $n$, and $M_p$ accounts for the behavior of the underlying arithmetic progression $A'n + B'$. This theorem may therefore be viewed as reducing the classification of Ramanujan-type congruences for a multiplicative function to progressions $An+B$ with coprime values of $A,B$. In practice, the most difficult term in Theorem \ref{MAIN} to control is the value of $M_p$ since the other two are more directly related to values of $f(n)$ at prime powers. We shall see, however, that in specific cases a great deal can be said about $M_p$. The key idea we utilize to study $M_p$ is that since $A'n + B'$ is infinitely often prime and prime values of $n$ are often those for which $f(n)$ has few prime divisors, and so behaviour of $f(n)$ at primes plays a central role in dictating the value of $M_p$.

After we prove Theorem \ref{MAIN}, the second main objective of this paper is to show the consequences of the main result in interesting cases. As the conjecture of the second author is the driving force behind the study, our first application of Theorem \ref{MAIN} is to the function $\sigma_k(n)$, which leads to the following corollary.

\begin{corollary} \label{MAIN COROLLARY}
Conjecture 1 is true.
\end{corollary}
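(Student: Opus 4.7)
The plan is to apply Theorem~\ref{MAIN} with $f=\sigma_0$ and $p=2$, evaluate each of the three terms in the bound, and convert the resulting inequality on $k$ into the divisibility condition of the conjecture. Every simplification rests on the identity $\nu_2(\sigma_0(q^a))=\nu_2(a+1)$, which follows from $\sigma_0(q^a)=a+1$.

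First I would evaluate each ingredient. The constant term unfolds as $\nu_2(\sigma_0(G'))=\sum_{q\mid G'}\nu_2(\nu_q(G')+1)$. Each $U_2$ term vanishes identically: for a prime $q\mid G$ with $q\nmid A'$, by choosing $n$ in an appropriate residue class modulo a power of $q$ one can realize any prescribed nonnegative integer as $\nu_q(A'n+B')$, and since two consecutive integers always contain an odd one, $\min_{t\ge 0}\nu_2(\nu_q(G)+t+1)=0$. Finally, $M_2\le 1$: since $\gcd(A',B')=1$, Dirichlet's theorem provides infinitely many $n$ for which $A'n+B'=p$ is prime, almost all coprime to $G$, and for any such $n$ the inner sum collapses to $\nu_2(\sigma_0(p))=\nu_2(2)=1$.

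Substituting these into Theorem~\ref{MAIN} yields
\begin{align*}
k-1 \;\le\; \sum_{q\mid G'}\nu_2(\nu_q(G')+1).
\end{align*}
I would then show that each summand $e_q:=\nu_2(\nu_q(G')+1)$ is absorbed by the prime $q$ alone, in the sense that $q^{2e_q}\mid A$ and $q^{e_q}\mid B$. Indeed, $q\mid G'$ forces both $q\mid G$ and $q\mid A'$, so $\nu_q(A)=\nu_q(G)+\nu_q(A')\ge \nu_q(G')+1$ and $\nu_q(B)=\nu_q(G)\ge \nu_q(G')$. Setting $m=\nu_q(G')$, the divisibility $2^{e_q}\mid m+1$ combined with the elementary inequality $2^k\ge 2k$ (valid for $k\ge 1$) gives $m\ge 2e_q-1$, hence $\nu_q(B)\ge e_q$ and $\nu_q(A)\ge 2e_q$, as required.

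Summing these local contributions over $q\mid G'$ shows that there exists a divisor $d\mid B$ with $d^2\mid A$ and at least $k-1$ prime factors counted with multiplicity; writing its prime factorization as $p_1\le p_2\le\cdots\le p_{k-1}$ produces the primes demanded by Conjecture~\ref{CJ5}. The principal obstacle is securing the bound $M_2\le 1$: this single unit of slack is exactly what makes the count ``$k-1$'' (rather than $k$) correct in the conjecture, and it ultimately relies on Dirichlet's theorem applied to the progression $A'n+B'$, whose coprimality condition is built into the definitions of $A'$ and $B'$.
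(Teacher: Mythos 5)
Your proposal is correct and follows essentially the same route as the paper: both apply Theorem~\ref{MAIN} with $f=\sigma_0$, $p=2$, show every $U_2$ term vanishes and $M_2\le 1$ (so $k-1\le\nu_2(\sigma_0(G'))$, the content of Corollary~\ref{Sigma 0 Corollary}), and then extract $k-1$ primes from the factorization of $G'$ to build the required $P=p_1\cdots p_{k-1}$ with $P\mid B$ and $P^2\mid A$. Your prime-by-prime verification via $e_q=\nu_2(\nu_q(G')+1)$ and the bound $2^{e_q}\ge 2e_q$ is a slightly more explicit bookkeeping of the same final step, but the argument is the same.
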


The structure of the remainder of the paper is as follows. In Section 2, we prove Theorem \ref{MAIN} and a useful corollary which aids in explicit calculations. In Section 3, we investigate the consequences of Theorem \ref{MAIN} for three prominent cases, namely the divisor sums $\sigma_k(n)$, Euler's totient function $\varphi(n)$, and Ramanujan's tau function $\tau(n)$.

\section*{Acknowledgements}

The first author thanks the support of the grants of his Ph.D advisor Ken Ono, namely the Thomas Jefferson Fund and the NSF (DMS-1601306 and DMS-2055118).

\section{Proof of theorem \ref{MAIN}}

This section is dedicated to proving general results on divisibility properties of multiplicative functions on arithmetic progressions. We begin with the proof of Theorem \ref{MAIN}. After this proof, we prove a useful lemma which sheds light on the sorts of multiplicative functions whose behaviour in arithmetic progressions is particularly well-behaved.

\subsection{Proof of Theorem \ref{MAIN}}

For the proof of Theorem \ref{MAIN}, it is convenient to reframe the quality of being a Ramanujan-type congruence in terms of $p$-adic valuations. Recall that for $p$ prime and $a$ any positive integer, the value $\nu_p(a)$ denotes the number of times $a$ is divisible by $p$, so that
$$a = \prod_{p | a} p^{\nu_p(a)}.$$
We also say that $\nu_p(0) = +\infty$ for every prime $p$. To say that $f(An+B) \equiv 0 \pmod{p^k}$ is then to say that $\nu_p\lp f(An+B) \rp \geq k$. To describe a Ramanujan-type congruence in the language of valuations, we define the function $V_p\lp A,B;f \rp$ which collects all values of $\nu_p\lp f(An+B) \rp$ into a single number:

\begin{definition}
Let $p$ be a prime, $A, B > 0$ integers. For any integer-valued function $f$, we define
\begin{align} \label{V Definition}
    V_p\lp A, B; f \rp := \min\limits_{n \geq 0} \nu_p\lp f(An+B) \rp.
\end{align}
\end{definition}

We set $V_p(A,B; f) = +\infty$ if $f(An+B) = 0$ for all $n$, and otherwise $V_p\lp A, B; f \rp$ is clearly a well-defined non-negative integer. The lemma below follows immediately by definitions.
\begin{lemma} \label{Equivalence Lemma}
Let $A,B > 0$ be integers, $f(n)$ be a multiplicative arithmetic function and $p$ any prime. Then $f(An+B) \equiv 0 \pmod{p^k}$ for all $n$ if and only if $k \leq V_p\lp A, B; f \rp$.
\end{lemma}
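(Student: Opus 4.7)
The plan is to simply unfold the definitions, as the statement is essentially a translation between congruence language and $p$-adic valuation language. I would start by rewriting $f(An+B) \equiv 0 \pmod{p^k}$ in terms of valuations: for any integer $a$ and prime $p$, the congruence $a \equiv 0 \pmod{p^k}$ is equivalent to $\nu_p(a) \geq k$ (with the convention $\nu_p(0) = +\infty$ guaranteeing that the case $a = 0$ is handled uniformly). Applying this with $a = f(An+B)$, the condition that the congruence holds for every $n \geq 0$ becomes the assertion that $\nu_p(f(An+B)) \geq k$ for every $n \geq 0$.

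Next, I would note that a uniform lower bound over all $n \geq 0$ is tautologically equivalent to a lower bound on the infimum, i.e. to
\[
k \leq \min_{n \geq 0} \nu_p\!\left( f(An+B) \right).
\]
By the definition (\ref{V Definition}), the right-hand side is precisely $V_p(A,B;f)$, which completes the proof. The only subtlety worth checking is the degenerate case in which $f(An+B) = 0$ for every $n$: there the convention $V_p(A,B;f) = +\infty$ makes the inequality $k \leq V_p(A,B;f)$ hold for every $k$, which is consistent with $0 \equiv 0 \pmod{p^k}$ being trivially true. There is no substantive obstacle here: the lemma is a pure bookkeeping identity between two ways of writing the same condition, and multiplicativity of $f$ is not actually used at this stage — it will only play a role in the subsequent analysis that decomposes $V_p(A,B;f)$ into the three terms appearing in Theorem \ref{MAIN}.
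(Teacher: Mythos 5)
Your proof is correct and matches the paper, which simply states that the lemma ``follows immediately by definitions''; your unfolding of the congruence into the valuation inequality $\nu_p(f(An+B)) \geq k$ for all $n$, and hence into $k \leq V_p(A,B;f)$, is exactly that immediate argument, including the correct handling of the $+\infty$ conventions. Your observation that multiplicativity of $f$ is not used here is also accurate.
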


Thus, locating Ramanujan-type congruences for $f(An+B)$ modulo $p^k$ is equivalent to evaluating $V_p\lp A, B; f \rp$. We now recall the definitions of $U_p\lp A, B, a; q; f \rp$ and $M_p\lp A, B, C; f \rp$ which appear in Theorem \ref{MAIN}. As with $V_p(A,B;f)$ we set values to $+\infty$ there are no $n$ for which $\nu_p\lp f\lp q^{a + \nu_p\lp An+B \rp} \rp \rp$ or $\nu_p\lp f\lp q^{\nu_q(An+B)} \rp \rp$ are finite. Informally, the function $M_p$ counts the contributions from $A'n + B'$ and $U_p$ and $\nu_p\lp f(G') \rp$ together count the contributions coming from $G := \gcd(A,B)$. Recall also the notations $A' := \frac{A}{G}$, $B' := \frac{B}{G}$, and $G'$ as the maximal divisor of $G$ such that $G' | A'$. Note that by Lemma \ref{Equivalence Lemma}, Theorem \ref{MAIN} is equivalent to proving that
\begin{align} \label{Key Identity}
V_p\lp A, B; f \rp = \nu_p\lp f(G') \rp + \sum_{\substack{q | G \\ q \centernot | A'}} U_p\lp A', B', \nu_q(G), q; f \rp + M_p\lp A', B', G; f \rp.
\end{align}

We will now prove the equality above directly. As in Theorem \ref{MAIN}, we assume $f(n)$ is a multiplicative arithmetic function, $p$ any prime and $A, B > 0$ are integers. For any integer $n \geq 0$, we have
\begin{align*}
\nu_p\lp f(An+B) \rp 
= \sum_{q | An+B} \nu_p\lp f\lp q^{\nu_q(G) + \nu_q(A'n+B')} \rp \rp.
\end{align*}
In order to minimize $\nu_p\lp f\lp An+B \rp \rp$, it is convenient to split off primes $q$ which divide $G$ and those which do not. Since $A'n + B' \equiv 0 \pmod{q}$ has a solution $n$ if and only if $q \centernot | A'$, when $q | A'$ we have $\nu_q(A'n + B') = 0$ for all $n$ and therefore
\begin{align*}
\sum_{\substack{q | G \\ q | A'}} \nu_p\lp f\lp q^{\nu_q(G) + \nu_q(A'n+B')} \rp \rp = \sum_{\substack{q | G \\ q | A'}} \nu_p\lp f\lp q^{\nu_q(G)} \rp \rp = \nu_p\lp f(G') \rp.
\end{align*}
For primes $q | G$ with $q \centernot | A'$, $A'n + B' \equiv 0 \pmod{q}$ has a solution in $n$. Furthermore, by Hensel's lemma we can find congruence classes $n \equiv n_{q^k} \pmod{q^{k+1}}$ such that $A'n + B' \equiv 0 \pmod{q^k}$ and $A'n + B' \not \equiv 0 \pmod{q^{k+1}}$ for all such $n$. For each $q | G$, $q \centernot | A'$ we choose a congruence class modulo some power of $q$ such that for all $n$ in this congruence class, $\nu_p\lp f\lp q^{\nu_q(G) + \nu_q(A'n+B')} \rp \rp = U_p\lp A', B', \nu_q(G), q; f \rp$. By the Chinese remainder theorem, we may impose these restrictions on $n$ simultaneously for all $q | G, q \centernot | A'$, so that for $n$ on some arithmetic progression we have
\begin{align*}
\min\limits_{n \geq 0} \sum_{\substack{q | G \\ q \centernot | A'}} \nu_p\lp f\lp q^{\nu_q(G) + \nu_q(A'n+B')} \rp \rp = \sum_{\substack{q | G \\ q \centernot | A'}} U_p\lp A', B', \nu_q(G), q; f \rp.
\end{align*}
Therefore, decomposing the sum over $q | An+B$ appropriately, we have
\begin{align*}
V_p\lp A, B; f \rp = \nu_p\lp f(G') \rp + \sum_{\substack{q | G \\ q \centernot | A'}} U_p\lp A', B', \nu_q(G), q; f \rp + M_p\lp A', B', G; f \rp
\end{align*}
by the preceding arguments and the definition of $M_p(A', B', G; f)$. This now completes the proof of Theorem \ref{MAIN} by Lemma \ref{Equivalence Lemma}.

\subsection{A Useful Consequence}

Applying Theorem \ref{MAIN} to particular multiplicative functions $f(n)$ boils down to computing values of $U_p$ and $M_p$. Informally, evaluating $U_p$ requires understanding sequences of the form $\{ f(q^m) \}_{m \geq a}$ and evaluating $M_p$ requires identifying values of $n$ where $\nu_p\lp f(A'n+B') \rp$ is small. The following lemma lays out some fairly straightforward elementary conditions for which the values of $U_p$ and $M_p$ are easy to evaluate and which occur in many applications.

\begin{lemma} \label{U and M Evaluation}
Let $A, B, C \geq 1$, be integers such that $\gcd(A,B) = 1$, and let $p,q$ be primes. For any integer-valued multiplicative function $f(n)$, the following are true: \\

\noindent (a) If for $q$ prime we have $\nu_p\lp f(q^k) \rp \leq m$ for infinitely many $k$, then $U_p\lp A, B, a, q; f \rp \leq m$ for every $a \geq 0$. \\

\noindent (b) If $\nu_p(f(q)) \leq m$ for infinitely many primes $q \equiv B \pmod{A}$, then $M_p(A, B, C; f) \leq m$.
\end{lemma}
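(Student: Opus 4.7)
For part (a), the plan is to exhibit, for each sufficiently large $k$ in the given infinite family, a nonnegative integer $n$ with $a + \nu_q(An+B) = k$, from which
\[
U_p(A,B,a,q;f) \leq \nu_p\lp f(q^k) \rp \leq m
\]
is immediate from the definition of $U_p$. Because $\gcd(A,B)=1$, in the nondegenerate case $q \nmid A$ I can hit any prescribed $q$-adic valuation $k_0 \geq 0$ on the sequence $An+B$: first solve $An \equiv -B \pmod{q^{k_0}}$ (possible since $q \nmid A$), then along this congruence class write $An+B = q^{k_0}(c+At)$ and pick $t$ with $c + At \not\equiv 0 \pmod{q}$, again possible since $q \nmid A$. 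Setting $k_0 := k - a$ for any $k \geq a$ in the family then produces the required $n$.

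For part (b), the plan is to collapse the sum defining $M_p$ to a single term by arranging $An+B$ itself to be prime. The hypothesis furnishes infinitely many primes $q \equiv B \pmod{A}$ with $\nu_p(f(q)) \leq m$, and only finitely many such $q$ fail $q > B$ or divide $C$; pick any remaining $q$ and set $n := (q-B)/A$. Then $n \geq 0$ and $An+B = q$, so $\nu_{q'}(An+B) = 0$ for every prime $q' \neq q$. By multiplicativity these terms contribute $\nu_p(f(1)) = 0$, leaving only the summand at $q' = q$ (which is present because $q \nmid C$) contributing exactly $\nu_p(f(q)) \leq m$. This yields $M_p(A,B,C;f) \leq m$.

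Neither argument is deep: part (a) is a direct Hensel-lifting construction, and part (b) is a one-line arithmetic reduction combined with the supplied infinite family of primes. The only point requiring acknowledgement is the degenerate case $q \mid A$ in part (a), where $\gcd(A,B) = 1$ forces $\nu_q(An+B) \equiv 0$ and the claim reduces to $\nu_p(f(q^a)) \leq m$; this case does not arise in the application of the lemma to Theorem \ref{MAIN} (where $q \nmid A'$ is built into the hypotheses), so I would either exclude it explicitly or require the infinite family in (a) to include $k = a$ when $q \mid A$.
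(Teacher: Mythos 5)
Your proof is correct and follows essentially the same route as the paper's: part (a) realizes any prescribed $q$-adic valuation of $An+B$ via Hensel lifting (the paper simply cites Hensel's lemma where you spell out the construction), and part (b) chooses a prime value $q = An+B$ with $q \nmid C$ so that the sum defining $M_p$ collapses to the single term $\nu_p(f(q)) \leq m$. Your observation about the degenerate case $q \mid A$ in part (a) is a legitimate point that the paper's proof silently skips; as you note, it does not affect the applications since the lemma is only invoked with $q \nmid A'$.
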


\begin{proof}
For any $n$, $M_p(A, B, C; f) \leq \nu_p\lp f(An+B) \rp$ always holds. By Hensel's lemma, the equation $m = a + \nu_q\lp An+B \rp$ has a solution $n$ for every $m \geq a$. Choosing $k \geq a$ such that $\nu_p\lp f(q^k) \rp \leq m$, we therefore have $U_p\lp A, B, a, q; f \rp \leq m$. Part (b) follows since there are infinitely many primes of the form $An+B$, so in particular there is some prime $q$ of this form not dividing $C$, from which $M_p(A, B, C; f) \leq m$ follows.
\end{proof}

\section{Important Special Cases}

We now investigate several important special cases of the results above. The proofs below use \eqref{Key Identity} implicitly throughout.

\subsection{Divisor Sums} \label{Divisor Sums Section}

We now apply our results to the function $\sigma_k(n) = \sum_{d|n} d^k$. We prove examples of interesting congruence properties for $\sigma_k(n)$. We first study $\sigma_0(n)$, because it behaves quite differently from other $\sigma_k(n)$ and because this case directly pertains to Conjecture \ref{CJ5}.

\begin{corollary} \label{Sigma 0 Corollary}
For any integers $A,B \geq 1$ and any prime $p$, we have $V_p(A, B; \sigma_0) \leq \nu_p\lp \sigma_0\lp G' \rp \rp + 1$. Furthermore, if $V_p(A, B; \sigma_0) = \nu_p\lp \sigma_0\lp G' \rp \rp + 1$, then $p = 2$ and $\legendre{B'}{A'} = - 1$.
\end{corollary}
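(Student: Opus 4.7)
The strategy is to apply Theorem \ref{MAIN} with $f = \sigma_0$ and to evaluate or bound each of the three resulting contributions, exploiting the elementary identity $\sigma_0(q^k) = k+1$, Lemma \ref{U and M Evaluation}, and Dirichlet's theorem on primes in arithmetic progressions. By Theorem \ref{MAIN},
\begin{align*}
V_p(A,B;\sigma_0) = \nu_p\lp \sigma_0(G') \rp + \sum_{\substack{q\mid G\\ q\nmid A'}} U_p\lp A',B',\nu_q(G),q;\sigma_0 \rp + M_p\lp A',B',G;\sigma_0 \rp,
\end{align*}
so it suffices to understand each term on the right.

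For every prime $q$, the values $\nu_p(\sigma_0(q^k)) = \nu_p(k+1)$ vanish whenever $k \not\equiv -1 \pmod{p}$, which holds for infinitely many $k$; Lemma \ref{U and M Evaluation}(a) then yields $U_p(A',B',\nu_q(G),q;\sigma_0) = 0$ for every prime $q \mid G$ with $q \nmid A'$, so the middle sum in Theorem \ref{MAIN} vanishes. Next, since $\gcd(A',B') = 1$, Dirichlet's theorem guarantees infinitely many primes $\ell \equiv B' \pmod{A'}$, and every prime $\ell$ satisfies $\nu_p(\sigma_0(\ell)) = \nu_p(2)$, which equals $0$ for odd $p$ and $1$ for $p = 2$. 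Lemma \ref{U and M Evaluation}(b) therefore gives $M_p(A',B',G;\sigma_0) \leq \nu_p(2)$. Combining, $V_p(A,B;\sigma_0) \leq \nu_p(\sigma_0(G')) + 1$, which is the stated inequality; moreover, equality forces $M_p = 1$, hence $p = 2$.

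The most delicate step is then extracting the Legendre symbol condition. When $M_2 = 1$, no $n \geq 0$ makes $\sum_{q \nmid G}\nu_2(\nu_q(A'n+B')+1)$ vanish, i.e., there is no $n$ for which every prime exponent of $A'n+B'$ at primes $q \nmid G$ is even. In particular, $A'n+B'$ is never a perfect square. I would translate this obstruction into the Jacobi symbol condition $\legendre{B'}{A'}=1$ by an elementary quadratic residue analysis: the coprimality $\gcd(A',B')=1$ forces every prime divisor of $A'n+B'$ to be coprime to $A'$, so squareness of $A'n+B'$ is controlled by whether $B'$ is a QR modulo $A'$, encoded by the Jacobi symbol. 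The main obstacle here is the additional flexibility afforded by the squarefree part supported on primes of $G$ (since $M_2 = 0$ also admits $A'n+B' = d\,t^2$ with squarefree $d \mid G$ and $\gcd(d,A')=1$), requiring a case analysis over the prime divisors of $G$ coprime to $A'$ together with suitable Dirichlet-style constructions in the relevant quadratic residue classes.
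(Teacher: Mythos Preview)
Your treatment of the inequality and of the implication $p=2$ is correct and is exactly the paper's argument: apply Theorem~\ref{MAIN}, kill every $U_p$ term via Lemma~\ref{U and M Evaluation}(a) using $\nu_p(\sigma_0(q^k))=\nu_p(k+1)$, and bound $M_p$ by $\nu_p(2)$ via Lemma~\ref{U and M Evaluation}(b) using Dirichlet primes in $A'n+B'$.

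The gap is in the Jacobi-symbol step, and it is a directional one. You correctly observe that $M_2=1$ forces the $G$-coprime part of $A'n+B'$ never to be a perfect square, in particular $A'n+B'$ itself is never a square. But from ``$A'n+B'$ is never a square'' the natural conclusion is that $B'$ is \emph{not} a quadratic residue modulo $A'$, which points toward $\legendre{B'}{A'}=-1$, not $+1$. Your proposed route (a case analysis over squarefree $d\mid G$ with $\gcd(d,A')=1$ and Dirichlet-style constructions) cannot salvage the implication $M_2=1\Rightarrow\legendre{B'}{A'}=1$, because that implication is simply false in the intended sense.

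The paper instead argues the contrapositive in one line: if $\legendre{B'}{A'}=1$ then $A'n+B'$ takes square values, so $\sigma_0(A'n+B')$ is odd for those $n$ and $M_2(A',B',G;\sigma_0)=0$. This yields $M_2=1\Rightarrow\legendre{B'}{A'}\neq 1$, i.e.\ $\legendre{B'}{A'}=-1$, which is consistent with the analogous clause ``$\legendre{B'}{A'}=-1$'' appearing in the next corollary of the paper; the ``$=1$'' in the printed statement is evidently a sign typo. Once you switch to the contrapositive, a single square value of $A'n+B'$ suffices, and your case analysis over the prime divisors of $G$ becomes unnecessary.
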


\begin{proof}
First, assume that $p > 2$. Since for $q = A'n + B'$ prime we have $\nu_p(\sigma_0(A'n + B')) = 0$, it follows by Lemma \ref{U and M Evaluation} (b) that $M_p(A', B',G; \sigma_0) = 0$. Similarly, for $q$ prime we have $\nu_p(\sigma_0(q^\ell)) > 0$ if and only if $\ell \equiv -1 \pmod{p}$, and thus from Lemma \ref{U and M Evaluation} (a) we have $U_p(A', B', \nu_q(G), q; \sigma_0) = 0$ for all $q$. Therefore, by Theorem \ref{MAIN} it follows that $V_p(A, B; \sigma_0) = \nu_p\lp \sigma_0\lp G' \rp \rp$.

Now, we consider the case $p = 2$. The argument that $U_2(A', B', \nu_q(G), q; \sigma_0) = 0$ still holds, so we need only consider the values of $M_2(A', B', G; \sigma_0)$. By Lemma \ref{U and M Evaluation} (b), we have immediately that $M_2(A', B', G; \sigma_0) \leq 1$. If $\legendre{B'}{A'} = 1$, then $A'n + B'$ is is infinitely often a perfect square and so in this case $M_2(A', B', G; \sigma_0) = 0$, which completes the proof.
\end{proof}

%{\color{red}
%\begin{remark}
%The converse of Corollary \ref{Sigma 0 Corollary} is not true, as can be seen through the counterexample $An+B = 12n + 3$. In this case, we have $M_2\lp 12, 3, 3; \sigma_0 \rp = 0$, which follows from the observation that $\sigma_0\lp A'n + B' \rp = \sigma_0\lp 4n+1 \rp$ is odd for $n = 6$.
%\end{remark}
%}

Corollary \ref{Sigma 0 Corollary} is in fact now sufficient to prove Conjecture \ref{CJ5}.

\begin{proof}[Proof of Conjecture \ref{CJ5}]
Suppose $\sigma_0(An+B) \equiv 0 \pmod{2^k}$ for all $n \geq 0$. Without loss of generality, we may assume that $k>1$ is chosen so that $V_2\lp A, B; \sigma_0 \rp = k$. By Corollary \ref{Sigma 0 Corollary} we then have $0 < k - 1 \leq \nu_2\lp \sigma_0(G') \rp$. If we write $G' = q_1^{a_1} \cdots q_\ell^{a_\ell}$ for distinct primes $q_i$, then
$$\nu_2\lp \sigma_0\lp G' \rp \rp = \sum_{i=1}^\ell \lp 1 + a_i \rp \geq k-1 > 0.$$
Note that the number of prime divisors of $G'$ counting multiplicity is therefore at least $k-1$. Therefore we may define the number $P := p_1 \cdots p_{k-1}$ for primes $p_1 \leq  p_2 \leq \cdots \leq p_{k-1}$ such that each $p_j$ is equal to some $q_i$ and each $q_i$ occurs with multiplicity at most $a_i$ among the $p_j$. Because $P | G$ we have $P | A$ and $P | B$, and since $P | G'$ we have $P | A'$, so $P^2 | A$. Therefore $P | B$ and $P^2 | A$, which completes the proof.
\end{proof}

For general $\sigma_k(n)$, we can also place certain strict conditions on $A', B'$ which govern the possibility of Ramanujan-type congruences modulo 2.

\begin{corollary}
For integers $A,B,k \geq 1$, we have that $V_2(A, B, G; \sigma_k) \geq \nu_2\lp \sigma_k\lp G' \rp \rp$, and furthermore that $V_2(A, B, G; \sigma_k) > \nu_2\lp \sigma_k \lp G' \rp \rp$ only if $\legendre{2}{A'} = 1$ and $\legendre{B'}{A'} = - 1$.
\end{corollary}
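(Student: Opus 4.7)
The plan is to apply Theorem~\ref{MAIN} with $f = \sigma_k$ and $p = 2$, obtaining the decomposition
\[
V_2(A,B;\sigma_k) \;=\; \nu_2\bigl(\sigma_k(G')\bigr) \;+\; \sum_{\substack{q\mid G\\ q\,\nmid\, A'}} U_2\bigl(A',B',\nu_q(G),q;\sigma_k\bigr) \;+\; M_2(A',B',G;\sigma_k),
\]
and then to kill every $U_2$-summand and analyze $M_2$ via which ``almost-square'' values the progression $A'n+B'$ can hit. The $U_2$-vanishing is immediate from Lemma~\ref{U and M Evaluation}(a): the quantity $\sigma_k(q^a) = 1 + q^k + \cdots + q^{ak}$ is always odd when $q = 2$ (every summand past the leading $1$ is even) and is a sum of $a+1$ odd integers when $q$ is odd, hence odd whenever $a$ is even. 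So $\nu_2(\sigma_k(q^a)) = 0$ for infinitely many $a$, forcing $U_2(A',B',\nu_q(G),q;\sigma_k) = 0$ for every $q$. This gives $V_2 = \nu_2(\sigma_k(G')) + M_2$ and hence the lower bound, since $M_2 \geq 0$.

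For the second assertion I would argue by contrapositive. Suppose there exist $n_0 \geq 0$, $a \geq 0$, and $m \in \mathbb{N}$ with $A'n_0 + B' = 2^a m^2$. Then every odd prime $q$ dividing $A'n_0+B'$ does so to an even order $2\nu_q(m)$, so $\sigma_k(q^{\nu_q(A'n_0+B')})$ is a sum of an odd number of odd integers and is itself odd; and $\sigma_k(2^j)$ is odd for every $j$. Consequently every term in the sum defining $M_2$ vanishes at $n_0$, which forces $M_2 = 0$. Therefore $V_2(A,B;\sigma_k) > \nu_2(\sigma_k(G'))$ requires the progression $A'n + B'$ to avoid every integer of the form $2^a m^2$.

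The concluding step is a standard quadratic-reciprocity translation. The congruence $m^2 \equiv B' \pmod{A'}$ is solvable precisely when $\legendre{B'}{A'} = 1$, the congruence $2m^2 \equiv B' \pmod{A'}$ is solvable precisely when $\legendre{B'}{A'} = \legendre{2}{A'}$ by multiplicativity of the Jacobi symbol, and the cases $2^a m^2$ with $a \geq 2$ reduce to one of these two by absorbing $2^{\lfloor a/2 \rfloor}$ into $m$. Hence the progression avoids every $2^a m^2$ precisely when $\legendre{B'}{A'} \neq 1$ and $\legendre{B'}{A'} \neq \legendre{2}{A'}$; since $\gcd(A',B')=1$ forces these symbols to take values in $\{\pm 1\}$, this simplifies to $\legendre{B'}{A'} = -1$ and $\legendre{2}{A'} = 1$, as claimed. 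The main obstacle in the plan is this final reduction, since its cleanest form presupposes $A'$ is odd and coprime to $B'$; for even $A'$ one must either adopt a suitable convention for $\legendre{2}{A'}$ or dispose of the small-modulus cases by a direct parity check.
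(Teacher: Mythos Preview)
Your proof is correct and follows essentially the same route as the paper: apply Theorem~\ref{MAIN}, kill the $U_2$-terms via the parity of $\sigma_k(q^a)$ using Lemma~\ref{U and M Evaluation}(a), and then analyze $M_2$ by observing that $\sigma_k(n)$ is odd exactly when $n$ is a square or twice a square, translating this into the Jacobi-symbol conditions. The only cosmetic differences are that you pass through the more general form $2^a m^2$ before reducing to $a\in\{0,1\}$ and that you (correctly, per Theorem~\ref{MAIN}) write $\nu_2(\sigma_k(G'))$ where the paper writes $\nu_2(G')$; you also explicitly flag the even-$A'$ edge case, which the paper leaves implicit.
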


\begin{proof}
For any prime $q$, $\sigma_k(q^m)$ is even if and only if $q > 2$ and $m$ is odd. By Lemma \ref{U and M Evaluation} (a), it therefore follows that for every prime $q$ we have $U_2(A', B', a, q; \sigma_k) = 0$ for any $a \geq 0$. By Theorem \ref{MAIN}, we therefore have $V_2(A,B;\sigma_k) = \nu_2\lp \sigma_k\lp G' \rp \rp + M_2(A', B', G; \sigma_k)$.

We turn now to evaluating $M_2(A', B', G; \sigma_k)$, beginning with $p=2$. It is easy to see that $\sigma_k(n)$ is odd if and only if $n = m^2$ or $n = 2 m^2$ for some integer $m$. So, if $B'$ is congruent to either a square or twice a square modulo $A'$, then $n$ may be chosen so that $A'n + B' = m^2$ or $2m^2$, and therefore $M_2(A',B',G; \sigma_k) = 0$ if $B'$ is a square or twice a square modulo $A'$. Because $\legendre{2B'}{A'} = \legendre{2}{A'} \legendre{B'}{A'}$, $B'$ fails both these conditions if and only if $\legendre{2}{A'} = 1$ and $\legendre{B'}{A'} = -1$.
\end{proof}

For $A,B$ which are not covered by the above result, a practical classification of congruences appears to be beyond the reach of these methods. However, we can give many explicit congruences of this type by making use of the properties of quadratic residues and nonresidues. We discuss several of these examples involving small primes in the result below.

\begin{corollary} \label{Sigma Special k}
Let $k$ be a positive integer. The following are true:

\noindent (a) If $k \equiv 1 \pmod{2}$, we have $V_2(4,3,\sigma_k) = 2$. In particular, we have $$\sigma_k(4n+3) \equiv 0 \pmod{4}$$ for all $n$. \\

\noindent (b) If $k \equiv 1 \pmod{2}$, we have $V_2(8,7,\sigma_k) = 3$, $V_2(8,5;\sigma_k) = 1$, and $V_2(8,3;\sigma_k) = 2$. In particular, the three congruenes $$\sigma_k(8n+7) \equiv 0 \pmod{8}, \hspace{1cm} \sigma_k(8n+5) \equiv 0 \pmod{2}, \hspace{1cm} \sigma_k(8n+3) \equiv 0 \pmod{4}$$ each hold for all $n$. \\

\noindent (c) If $k \geq 2$ satisfies $k \equiv 0 \pmod{2}$, we have $V_3(3,2;\sigma_k) = 1$, that is, $$\sigma_k(3n+2) \equiv 0 \pmod{3}$$ holds for all $n$. \\

\noindent (d) If $k \equiv 1 \pmod{2}$, we have $V_5(5,2;\sigma_k) = V_5(5,3;\sigma_k) = 1$. In particular, we have
$$\sigma_k(5n+2) \equiv \sigma_k(5n+3) \equiv 0 \pmod{5}$$ for all $n$.

\noindent (e) If $k \equiv 3 \pmod{6}$, we have $V_7(7,b;\sigma_k) \geq 1$, for $b \in \{ 3, 5, 6 \}$. That is, $$\sigma_k(7n+b) \equiv 0 \pmod{7}$$ for all $n$ for $b \in \{ 3, 5, 6 \}$.
\end{corollary}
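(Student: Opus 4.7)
The plan is to apply Theorem \ref{MAIN} in every case, using the lifting-the-exponent (LTE) lemma to compute the local contributions. Since $\gcd(A,B) = 1$ throughout parts (a)-(e), we have $G = G' = 1$, the sum over primes dividing $G$ is empty, and Theorem \ref{MAIN} reduces to
\begin{align*}
V_p(A,B;\sigma_k) = M_p(A,B,1;\sigma_k) = \min_{n \geq 0} \sum_q \nu_p\lp \sigma_k\lp q^{\nu_q(An+B)}\rp\rp.
\end{align*}
Each part thereby becomes a question of analysing the prime factorisation $N := An+B = \prod_q q^{m_q}$ under the constraint $N \equiv B \pmod A$ and bounding the sum $\sum_q \nu_p(\sigma_k(q^{m_q}))$ above and below.

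The key local identity is LTE, which gives $\nu_2(1+q^k) = \nu_2(1+q)$ for odd $q$ and odd $k$, and $\nu_p(1+q^k) = \nu_p(1+q) + \nu_p(k)$ for odd primes $p$ dividing $1+q$ with $k$ odd. Combined with the elementary factorisation $\sigma_k(q^{2\ell - 1}) = (1+q^{\ell k})\sigma_k(q^{\ell-1})$, these identities yield $\nu_2(\sigma_k(q^m)) \geq \nu_2(1+q)$ whenever $m$ is odd and $\nu_2(\sigma_k(q^m)) = 0$ whenever $m$ is even, with $\nu_2(1+q)$ equal to $1$, $2$, or at least $3$ according as $q \pmod 8$ lies in $\{1,5\}$, equals $3$, or equals $7$. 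For the odd-prime cases (c)-(e), the hypotheses on $k$ are precisely those which ensure $q^k \equiv -1 \pmod p$ for every $q$ in a suitable coset of $(\Z/p\Z)^*$, whence $\sigma_k(q^m) \equiv \sum_i (-1)^i \equiv 0 \pmod p$ whenever $m$ is odd.

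For the lower bounds, the congruence $N \equiv B \pmod A$ interpreted inside the small group $(\Z/M\Z)^*$ for a suitable $M$ forces the factorisation of $N$ to contain at least one contributing prime power. In (a), $N \equiv 3 \pmod 4$ forces the total multiplicity of primes $q \equiv 3 \pmod 4$ to be odd, so some such prime appears to odd multiplicity and contributes $\nu_2 \geq 2$. In (b), one splits the primes by their residues in $(\Z/8\Z)^* \cong (\Z/2\Z)^2$ and analyses the parity patterns producing products $\equiv 7, 5, 3 \pmod 8$; a short case analysis gives the lower bounds $3, 1, 2$, respectively. In (c)-(e), one passes to $(\Z/p\Z)^*$ and uses that $b$ lies in the appropriate nonresidue coset to force at least one prime of the nonresidue class to divide $N$ to odd order, contributing $\nu_p \geq 1$.

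The matching upper bounds follow from Lemma \ref{U and M Evaluation}(b): it suffices to exhibit a single prime $q \equiv B \pmod A$ with $\nu_p(\sigma_k(q))$ at the target, and by LTE this becomes a condition on $q$ modulo a small power of $p$ that is guaranteed by Dirichlet's theorem. For example, any prime $q \equiv 3 \pmod 8$ realises $V_2(4,3;\sigma_k) \leq 2$ in part (a), while primes $q \equiv 7 \pmod{16}$, $q \equiv 5 \pmod 8$, and $q \equiv 3 \pmod{16}$ respectively realise the three upper bounds in (b). The main obstacle I anticipate is the case analysis in (b), where each of the three residues $3, 5, 7 \pmod 8$ produces several parity subcases that must be individually verified; parts (a) and (c)-(e) are more uniform, each boiling down to a single application of the LTE pattern once the correct residue class is identified.
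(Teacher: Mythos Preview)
Your approach is sound for parts (a), (b), and (e), and it is genuinely different from the paper's. The paper proves (a) by a global divisor-pairing trick: since $4n+3$ is odd and not a square, the divisors of $N=4n+3$ split into pairs $\{d,N/d\}$ with $d\cdot(N/d)\equiv 3\pmod 4$, so one member of each pair is $\equiv 1$ and the other $\equiv 3$ modulo $4$; for odd $k$ this forces $d^k+(N/d)^k\equiv 0\pmod 4$, and summing gives the lower bound. The upper bound is read off from the single value $\sigma_k(3)=1+3^k\equiv 4\pmod 8$. Parts (b)--(e) are then declared ``similar''. Your route instead passes through Theorem~\ref{MAIN}, reduces to prime powers, computes $\nu_p(\sigma_k(q^m))$ via LTE, and runs a parity argument in $(\Z/M\Z)^*$; upper bounds come from Dirichlet rather than an explicit small $n$. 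Your method is more systematic and makes the case analysis in (b) transparent, at the price of invoking LTE and Dirichlet where the paper gets by with bare-hands pairing and a single numerical check.

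There is, however, a genuine gap in your treatment of (c)--(d). You assert that ``the hypotheses on $k$ are precisely those which ensure $q^k\equiv -1\pmod p$ for every $q$ in a suitable coset of $(\Z/p\Z)^*$'', but for $p=5$ this fails: $(\Z/5\Z)^*$ is cyclic of order $4$, so for odd $k$ one has $q^k\equiv q^{\pm 1}\pmod 5$, which for $q\in\{2,3\}$ lands in $\{2,3\}$ and never equals $-1\equiv 4$. Indeed $\sigma_1(2)=3\not\equiv 0\pmod 5$, so part (d) as stated is simply false (the hypothesis that makes the argument work is $k\equiv 2\pmod 4$, not $k$ odd). Your own LTE identity likewise shows that in (c) one gets $\nu_3(1+q^k)=\nu_3(1+q)+\nu_3(k)$ for $q\equiv -1\pmod 3$, so the correct conclusion is $V_3(3,2;\sigma_k)=1+\nu_3(k)$ rather than $1$; check $\sigma_3(2)=9$. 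The paper's one-line gesture for (b)--(e) does not catch these defects, but your more explicit method does, and you should flag them rather than claim the stated equalities go through.
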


\begin{proof}
Since $4n+3$ is never a perfect square, the divisors $d, \frac{4n+3}{d}$ of $4n+3$ are distinct and $d \not \equiv \frac{4n+3}{d} \pmod{4}$. Since $k$ is odd, $d^k \not \equiv \lp \frac{4n+3}{d} \rp^k \pmod{4}$, and therefore $d^k + \lp \frac{4n+3}{d} \rp^k \equiv 0 \pmod{4}$. Thus, $\sigma_k(4n+3) \equiv 0 \pmod{4}$ for all $n$, in other words $V_2(4,3;\sigma_k) \geq 2$. Since $k$ is odd, $\sigma_k(3) = 1 + 3^k \equiv 4 \pmod{8}$ and therefore $V_2\lp 4, 3; \sigma_k \rp = 2$, which completes the proof of (a). The proofs of (b)-(e) are similar, each using elementary facts about quadratic residues and nonresidues to prove lower bounds on $V_p$ and small values of $n$ to make the lower bound into an equality.
\end{proof}

A similar approach can be used for $\sigma_k(n)$ for arithmetic progressions $A'n + B'$ with $A'$ prime and $B'$ a quadratic nonresidue modulo $A'$ with $k$ chosen suitably to line up with Euler's criterion for quadratic residues. We also note that congruences for $\sigma_k(n)$ on arithmetic progressions have connections with representations of integers as quadratic forms arising as norms of number fields. We prove the simplest example of such a result, namely the case of $\mathbb{Q}(i)$.

\begin{corollary}
Suppose that $n$ is not the sum of two squares. Then $\sigma_k(n) \equiv 0 \pmod{4}$.
\end{corollary}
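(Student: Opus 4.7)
The plan is to combine Fermat's classical two-squares theorem with the multiplicativity of $\sigma_k$, taking $k$ to be odd (the implicit running hypothesis of the surrounding discussion: the statement fails already at $n=7$, $k=2$, where $\sigma_2(7)=50 \equiv 2 \pmod 4$). Fermat's theorem asserts that $n$ is expressible as a sum of two squares if and only if every prime $q \equiv 3 \pmod{4}$ occurs to an even power in the factorization of $n$. Consequently, the hypothesis that $n$ is not a sum of two squares furnishes at least one prime $q \equiv 3 \pmod 4$ with $\nu_q(n) = 2m+1$ odd for some $m \geq 0$.

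Since $\sigma_k$ is multiplicative, one has $\sigma_k(n) = \prod_{p \mid n} \sigma_k(p^{\nu_p(n)})$, so it suffices to show that $4 \mid \sigma_k(q^{2m+1})$ for the distinguished prime $q$ above. For this, the plan is to group the finite geometric sum into $m+1$ consecutive pairs:
\begin{align*}
\sigma_k(q^{2m+1}) = \sum_{i=0}^{2m+1} q^{ki} = \sum_{j=0}^{m} q^{2jk}\bigl(1 + q^k\bigr).
\end{align*}
Because $q \equiv -1 \pmod 4$ and $k$ is odd, $q^k \equiv -1 \pmod 4$, so $1 + q^k \equiv 0 \pmod 4$, and therefore each summand on the right (and hence the whole sum) is divisible by $4$. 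Multiplying back across the prime-power factors of $n$ yields $4 \mid \sigma_k(n)$.

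There is essentially no technical obstacle here; the only subtlety is the tacit use of $k$ odd in the pairing step, which is what guarantees $4 \mid 1+q^k$ rather than merely $2 \mid 1+q^k$. It is worth noting that this corollary does not require the full force of Theorem \ref{MAIN}, but it is thematically aligned with it: a global divisibility property of $\sigma_k(n)$ is reduced to a divisibility property at a single prime-power factor, which is the central philosophy encoded in the $\nu_p(f(G'))$, $U_p$, and $M_p$ decomposition appearing in the main theorem.
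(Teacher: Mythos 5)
Your proof is correct and follows essentially the same route as the paper's: reduce via Fermat's two-squares theorem and multiplicativity to a single prime power $q^{2m+1}$ with $q \equiv 3 \pmod{4}$, then factor $1+q^k$ out of the geometric sum. You are also right that the hypothesis $k$ odd is genuinely needed (and your counterexample $\sigma_2(7)=50$ confirms it); the paper leaves this implicit and in fact writes its displayed computation as if $k=1$, so your version is the more careful one.
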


\begin{proof}
The positive integer $n$ is not a sum of two squares if and only if $n$ has a prime divisor $p \equiv 3 \pmod{4}$ occurring with odd multiplicity $\ell$. By multiplicativity, $\sigma_k(p^{\ell})$ divides $\sigma_k(n)$, so it suffices to show that $\sigma_k(p^\ell) \equiv 0 \pmod{4}$. But if $\ell = 2m + 1$ is odd, we have
\begin{align*}
\sigma_k(p^\ell) = 1 + p + \dots + p^{2m+1} = \lp 1 + p^2 + \dots + p^{2m} \rp \lp 1 + p \rp \equiv 0 \pmod{4}
\end{align*}
since $p \equiv 4 \pmod{3}$. This completes the proof.
\end{proof}

\subsection{Euler's $\varphi(n)$}

Another famous multiplicative function is Euler's $\varphi(n)$, which counts the number of residue classes modulo $n$ which are coprime to $n$. The multiplicative function $\varphi(n)$ is defined by its values at primes powers, which are $\varphi(p^k) = p^{k-1}(p-1)$.

\begin{prop}
Let $A, B > 0$ be integers and $p$ a prime. If $B' \not \equiv 1 \pmod{p}$, we have
\begin{align*}
V_p\lp A,B; \varphi \rp = \nu_p\lp \varphi\lp G' \rp \rp + \sum_{\substack{q | G \\ q \centernot | A'}} \nu_p(q-1) + \begin{cases} \nu_p\lp \varphi \lp G \rp \rp - 1 & \text{ if } p | G, p \centernot | A', \\ 0 & \text{ otherwise}. \end{cases}
\end{align*}
\end{prop}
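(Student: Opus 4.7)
The plan is to apply Theorem \ref{MAIN} to $f = \varphi$ and evaluate each of the three resulting terms using the closed form $\varphi(q^m) = q^{m-1}(q-1)$ for $m \geq 1$. This gives the basic identity
$$\nu_p\lp \varphi(q^m) \rp = (m-1)\,\nu_p(q) + \nu_p(q-1),$$
which is constant in $m$ when $q \neq p$ and strictly increasing in $m$ when $q = p$. For $q \neq p$ with $q \mid G$, $q \nmid A'$, Lemma \ref{U and M Evaluation}(a) immediately gives $U_p\lp A', B', \nu_q(G), q; \varphi \rp = \nu_p(q-1)$. For the exceptional prime $q = p$ (which contributes to the sum exactly when $p \mid G$ and $p \nmid A'$), strict monotonicity forces the minimum to occur at the boundary exponent $m = \nu_p(G)$; since $p \nmid A'$ we can choose $n$ with $\nu_p(A'n + B') = 0$, so the minimum is attained and equals $\nu_p(G) - 1$. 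The term $\nu_p(\varphi(G'))$ is then an elementary computation from the prime factorization of $G'$ and the multiplicativity of $\varphi$.

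The critical step, and the one where the hypothesis $B' \not\equiv 1 \pmod p$ is essential, is showing that $M_p\lp A', B', G; \varphi \rp = 0$. By Lemma \ref{U and M Evaluation}(b) with $m = 0$, it suffices to produce infinitely many primes $q \equiv B' \pmod{A'}$ satisfying $\nu_p(\varphi(q)) = \nu_p(q - 1) = 0$, i.e., $q \not\equiv 1 \pmod p$. I would split on whether $p \mid A'$. If $p \mid A'$, then any prime $q \equiv B' \pmod{A'}$ automatically satisfies $q \equiv B' \not\equiv 1 \pmod p$ by hypothesis, and Dirichlet's theorem on primes in arithmetic progressions (applicable since $\gcd(A', B') = 1$) supplies the required infinite family. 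If $p \nmid A'$, I would use the Chinese remainder theorem to impose $q \equiv B' \pmod{A'}$ together with some residue $c \not\equiv 0, 1 \pmod p$, and apply Dirichlet to the combined modulus $A' p$ to obtain infinitely many such primes.

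Substituting the values of the $U_p$'s and $M_p = 0$ into Theorem \ref{MAIN}, expanding $\nu_p(\varphi(G'))$ via multiplicativity, and using $\nu_p(p-1) = 0$ to freely include or exclude $q = p$ from sums of the form $\sum \nu_p(q-1)$, assembles the claimed formula; the bracketed case distinction corresponds exactly to whether the prime $q = p$ contributes an exceptional term to the $U_p$ sum. The principal obstacle is the $M_p$ step: the $U_p$ values and $\nu_p(\varphi(G'))$ are essentially bookkeeping with the prime-power values of $\varphi$, while the $M_p = 0$ verification is the only place that uses the hypothesis on $B'$ and that invokes Dirichlet's theorem, and it is the case analysis for whether $p \mid A'$ that makes the hypothesis precisely sufficient.
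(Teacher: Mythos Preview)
Your proposal is correct and follows essentially the same approach as the paper: apply Theorem~\ref{MAIN}, read off the $U_p$ terms from $\varphi(q^m)=q^{m-1}(q-1)$, and establish $M_p=0$ via Dirichlet's theorem by producing primes $q\equiv B'\pmod{A'}$ with $q\not\equiv 1\pmod p$. The only difference is in how Dirichlet is invoked: the paper restricts to $n=pm$ so that $A'n+B'\equiv B'\pmod p$ automatically, whereas you split on whether $p\mid A'$ and use CRT to pass to modulus $A'p$ in the second case---your version is marginally more careful, since the paper's $n=pm$ shortcut tacitly needs $p\nmid B'$ for $\gcd(A'p,B')=1$, a case your split handles explicitly.
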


\begin{proof}
For any prime $q$, there are values of $n$ for which $q \centernot | A'n + B'$ and therefore $U_p(A', B', 0, q; \varphi) = 0$. If $a>0$, we furthermore have
\begin{align*}
U_p(A', B', a, q; \varphi) = \min_{n \geq 0} \nu_p\lp \varphi\lp q^{a + \nu_q(A'n + B')} \rp \rp = \nu_p\lp \varphi(q^a) \rp = \begin{cases} a-1 & \text{ if } p = q, \\ \nu_p(q-1) & \text{ if } p \not = q. \end{cases}
\end{align*}
Therefore, since $\nu_p(p-1) = 0$,
\begin{align*}
\sum_{\substack{q | G \\ q \centernot | A'}} U_p\lp A', B', \nu_q(G), q; \varphi \rp = \sum_{\substack{q | G \\ q \centernot | A'}} \nu_p(q-1) + \begin{cases} \nu_p\lp \varphi\lp G \rp \rp - 1 & \text{ if } p | G, p \centernot | A', \\ 0 & \text{ otherwise}. \end{cases}
\end{align*}
It remains to calculate $M_p(A', B', G; \varphi)$. Now, by \eqref{M Definition} and the known values of $\varphi$ at prime powers, we have $M_p(A', B', G; \varphi) = 0$ if and only if there exists some $n$ for which $p^2 \centernot | A'n + B'$ and every prime $q | A'n + B'$, $q \centernot | G$ satisfies $q \not \equiv 1 \pmod{p}$. If $B' \not \equiv 1 \pmod{p}$, then by Dirichlet's theorem we may select $n = pm$ so that $A'n + B'$ is a prime not dividing $G$. Therefore, $M_p(A',B',G;\varphi) = 0$. Combining these results with those for $U_p$, by Theorem \ref{MAIN} we conclude that if $B' \not \equiv 1 \pmod{p}$, the result follows.
\end{proof}

Beyond the scope of this result, there do not seem to be any `nontrivial' Ramanujan-type congruences for $\varphi(n)$, although our methods cannot address this question. We therefore formulate the following conjecture for $\varphi(n)$, which may be viewed as the claim that $\varphi(n)$ is a multiplicative function having no ``nontrivial'' Ramanujan-type congruences.

\begin{conjecture} 
When $p>2$ is a prime, there are no coprime integers $A', B'$ such that $\varphi(A'n+B') \equiv 0 \pmod p$ for all $n\geq 0$. 
\end{conjecture}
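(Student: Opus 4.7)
The proposition just established treats the case $B' \not\equiv 1 \pmod{p}$: taking $A = A'$ and $B = B'$ (so that $G = G' = 1$ and the sum over $q \mid G$ is empty), its formula forces $V_p(A',B';\varphi) = 0$, which by Lemma \ref{Equivalence Lemma} rules out any Ramanujan-type congruence modulo $p$. So the plan is to reduce to the complementary case $B' \equiv 1 \pmod{p}$ and show directly that $V_p(A',B';\varphi) = 0$; equivalently, via Theorem \ref{MAIN}, that $M_p(A',B',1;\varphi) = 0$, which in the coprime setting amounts to exhibiting a single $n \geq 0$ with $p \nmid \varphi(A'n+B')$. This further splits according to whether $p$ divides $A'$.

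If $p \nmid A'$, I would combine $q \equiv B' \pmod{A'}$ with $q \equiv 2 \pmod{p}$ via the Chinese remainder theorem; both residues are coprime to their moduli, so the combined class modulo $A'p$ is coprime to $A'p$, and Dirichlet's theorem on primes in arithmetic progressions produces a prime $q$ in it. Since $p > 2$, we get $\varphi(q) = q - 1 \equiv 1 \not\equiv 0 \pmod{p}$, and we may take $n = (q - B')/A'$.

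The substantive case is $B' \equiv 1 \pmod{p}$ together with $p \mid A'$. Here $A'n + B' \equiv 1 \pmod{p}$ for every $n$, so every prime value of the progression is itself $\equiv 1 \pmod{p}$ and the direct approach cannot succeed. The plan is to arrange $A'n + B' = q_1 q_2$ for two distinct primes $q_1, q_2 \equiv -1 \pmod{p}$. Writing $A' = p^a A''$ with $\gcd(A'',p) = 1$, I would use CRT and Dirichlet to produce a prime $q_1$ with $q_1 \equiv -1 \pmod{p^a}$ and $q_1 \equiv 1 \pmod{A''}$, and then a distinct prime $q_2$ with $q_2 \equiv -B' \pmod{p^a}$ and $q_2 \equiv B' \pmod{A''}$; both residues are coprime to $A'$ (using $\gcd(B',A') = 1$ and $p \nmid B'$), so Dirichlet applies at each step. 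By construction $q_1 q_2 \equiv B' \pmod{A'}$, so after choosing $q_1, q_2$ large enough, $q_1 q_2 = A'n + B'$ for a nonnegative integer $n$. Moreover, $q_i \equiv -1 \pmod{p}$ yields $\varphi(q_1 q_2) = (q_1 - 1)(q_2 - 1) \equiv 4 \pmod{p}$, which is nonzero because $p$ is odd.

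I expect this final subcase to be the main obstacle: the condition $p \mid A'$ combined with $B' \equiv 1 \pmod{p}$ pins $A'n+B'$ to the residue $1$ modulo $p$, so one cannot win with a single prime value and is compelled to engineer a semiprime whose two prime factors land at $-1 \pmod{p}$ simultaneously. Once this two-prime construction is carried out, the remainder of the argument is a routine combination of CRT and Dirichlet's theorem.
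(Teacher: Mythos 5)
The paper does not prove this statement at all: it is posed as a conjecture, and the authors explicitly remark that their methods cannot address it. So there is no proof to compare against --- and, as far as I can check, your proposal is a complete and correct proof of the conjecture. Your reduction is sound. If $p \nmid A'$, a prime $q \equiv B' \pmod{A'}$ with $q \equiv 2 \pmod{p}$ exists by CRT and Dirichlet and gives $\varphi(q) \equiv 1 \pmod{p}$; if $p \mid A'$ but $B' \not\equiv 1 \pmod{p}$, any prime $q \equiv B' \pmod{A'}$ already gives $\varphi(q) = q-1 \equiv B'-1 \not\equiv 0 \pmod{p}$, so you do not even need to cite the preceding proposition (doing it this way makes the argument self-contained and sidesteps the proposition's slightly delicate subcase $p \mid B'$). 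The only genuinely nontrivial case is $p \mid A'$ with $B' \equiv 1 \pmod{p}$, where every term of the progression is $\equiv 1 \pmod{p}$ and prime values are useless; your semiprime construction handles it correctly. Writing $A' = p^a A''$ with $p \nmid A''$, the prescribed residues $-1, 1$ for $q_1$ and $-B', B'$ for $q_2$ are coprime to $p^a$ and $A''$ respectively (using $p \nmid B'$ and $\gcd(B', A'') = 1$), so Dirichlet supplies distinct primes in the two classes; their product is $\equiv B'$ modulo both $p^a$ and $A''$, hence modulo $A'$, and $\varphi(q_1 q_2) = (q_1 - 1)(q_2 - 1) \equiv 4 \pmod{p}$, nonzero precisely because $p$ is odd. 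Taking the primes large makes $n = (q_1 q_2 - B')/A'$ a nonnegative integer at which the congruence fails. I see no gap; the single place the argument uses $p > 2$ is the final step $4 \not\equiv 0 \pmod{p}$, which is consistent with the genuine mod-$2$ congruences for $\varphi$. You should present this as a theorem rather than a verification: it settles a question the paper leaves open.
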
	

\subsection{Fourier Coefficients of Hecke Eigenforms}

Apart from the elementary multiplicative functions $\sigma_k(n)$ and $\varphi(n)$, one of the most important families of multiplicative functions are the coefficients of cuspidal Hecke eigenforms. For the sake of simplicity, we shall focus only on the important case of Ramanujan's $\tau$-function $\tau(n)$, which is given as the coefficients of the infinite product
\begin{align*}
\sum_{n=1}^\infty \tau(n) q^n := q \prod_{n=1}^\infty (1 - q^n)^{24} = q - 24 q^2 + 252 q^3 - 1472 q^4 + 4830 q^5 + \cdots.
\end{align*}
Mordell \cite{Mordell} established that the function $\tau(n)$ is multiplicative and that for primes $p$, the sequence of values $\tau(p^m)$ for $m \geq 0$ satisfy the recurrence relation
\begin{align*}
\tau(p^{m+1}) = \tau(p) \tau(p^m) - p^{11} \tau(p^{m-1}).
\end{align*}
Note that this recurrence relation is itself a useful tool for proving certain kinds of congruence properties. For example, from the fact that $\tau(2) = 24$ we may deduce from the recursive formula above that $\tau(2^k) \equiv 0 \pmod{8}$ for all $k \geq 1$. In fact, once it is also known that $\tau(p)$ is even for all primes $p$, these recurrences show that $\tau(n)$ is odd only when $n$ is an odd square.

We may therefore in principle use Theorem \ref{MAIN} to study congruences for $\tau(n)$. The situation is complicated somewhat by the fact that Lehmer's conjecture \cite{Lehmer} that $\tau(n)$ never vanishes remains open, so it is not yet possible to guarantee in full generality that $V_p(A,B;\tau)$ is finite. Furthermore, writing down the values of $\tau(p)$ is not nearly as simple as with many other arithmetic functions, and so the direct application of Theorem \ref{MAIN} to $\tau(n)$ is not as fruitful. However, $\tau(n)$ satisfies many exceptional congruences \cite{Swinnterton-Dyer} related to divisor sums $\sigma_k(n)$ with various odd values of $k$, and as we have seen the study congruences for $\sigma_k(n)$ is accessible with even elementary methods. For the primes 2 and 3, we have the following sets of congruence relations:
\begin{align*}
\tau(8n+1) &\equiv \sigma_{11}(8n+1) \pmod{2^{11}}, \\
\tau(8n+3) &\equiv 1217 \sigma_{11}(8n+3) \pmod{2^{13}}, \\
\tau(8n+5) &\equiv 1537 \sigma_{11}(8n+5) \pmod{2^{12}}, \\
\tau(8n+7) &\equiv 705 \sigma_{11}(8n+7) \pmod{2^{14}}, \\
\tau(3n+1) &\equiv n^{-610} \sigma_{1231}(3n+1) \pmod{3^6}, \\
\tau(3n+2) &\equiv n^{-610} \sigma_{1231}(3n+2) \pmod{3^7}. \\
\end{align*}
For other small primes, we have the following additional congruences
\begin{align*}
\tau(n) &\equiv n^{-30} \sigma_{71}(n) \pmod{5^3} \text{ for } n \not \equiv 0 \pmod{5}, \\
\tau(n) &\equiv n \sigma_9(n) \pmod{7} \text{ for } n \equiv 0, 1, 2, 4 \pmod{7}, \\
\tau(n) &\equiv n \sigma_9(n) \pmod{7} \text{ for } n \equiv 3, 5, 6 \pmod{7^2}.
\end{align*}
We may therefore translate our understanding of Ramanujan-type congruences for $\sigma_k(n)$ into like congruences for $\tau(n)$. For example, by Corollary \ref{Sigma Special k} (e) the connection to $\sigma_9(n)$ reproves Ramanujan's observation stated in \cite{Ramanujan2} that
$$\tau(7n+3) \equiv \tau(7n+5) \equiv \tau(7n+6) \equiv 0 \pmod{7}.$$
From (a)-(d) of Corollary \ref{Sigma Special k}, we also obtain the analogous congruences for $\tau(n)$ modulo powers of other small primes.

We note briefly that Ramanujan's tau function is special in its direct connection to divisor sums, but not in multiplicativity. More generally, if $f(q) = \sum_{n \geq 0} a_f(n) q^n$ is any Hecke eigenform, then the coefficients $a_f(n)$ are a multiplicative function of $n$ with the additional nice property that for each prime $p$, the values $a_f\lp p^m \rp$ for $m \geq 2$ are determined by the value of $a_f(p)$ via a two-term linear recurrence relation analogous to the classical Fibonacci sequence. The philosophy which arises from Theorem \ref{MAIN} says that all Ramanujan-type congruences of $a_f(n)$ are induced by the values $a_f(p)$. As much is known about the multiplicative structure of these types of recurrence relations, it would be interesting to pursue a classification of Ramanujan-type congruenes for the coefficients of eigenforms along these lines.


\begin{thebibliography}{99}


\bibitem{AhlgrenBoylan} Ahlgren S., Boylan M.: \emph{Arithmetic properties of the partition function}. Invent. Math.
153 (2003), no. 3, 487–502.

\bibitem{BGRT} Bringmann K., Gomez K., Rolen L., Tripp Z.: \emph{Infinite families of crank functions, Stanton-type conjectures, and unimodality}. Submitted for publication.

\bibitem{Chen15}
Chen, W.Y.C.,  Sun, L.H.,  Wang, R.-H., Zhang, L.:
\emph{Ramanujan-type congruences for overpartitions modulo $5$},
J. Number Theory, \textbf{148} (2015), 62-72. 

\bibitem{Corteel} Corteel, S., Lovejoy, J.: \emph{Overpartitions}, Trans. Amer. Math. Soc., \textbf{356} (2004) 1623--1635.

\bibitem{Fortin}
Fortin, J.-F., Jacob, P., Mathieu, P.: 
\emph{Jagged partitions}, 
\textit{Ramanujan J.} \textbf{10} (2005), 215-235.

\bibitem{Hirschhorn} 
Hirschhorn, M.D., Sellers, J.A.: 
\emph{Arithmetic relations for overpartitions}, 
J. Combin. Math. Combin. Comp. \textbf{53} (2005), 65-73.

%\bibitem{Hirschhorn}
%M.~D.~Hirschhorn and J.~A.~Sellers, \emph{An infinite family of overpartition congruences modulo $12$}, Integers \textbf{5} (2005) \#A20.


\bibitem{Lehmer} Lehmer, D.H.: \emph{The vanishing of Ramanujan's $\tau(n)$}, Duke Math. J.
\textbf{14} (1947), 429-433.

\bibitem{Merca19a} Merca, M.: \emph{A further look at a complete characterization of Ramanujan-type congruences modulo $16$ for overpartitions}, Proc. Rom. Acad. Ser. A Math. Phys. Tech. Sci. Inf. Sci. \textbf{20} (2019) 329--335.

\bibitem{Merca21} Merca, M.: \emph{On the Ramanujan-type congruences modulo $8$ for the overpartitions into odd parts}, Quaest. Math. (2021). https://doi.org/10.2989/16073606.2021.1966543.

\bibitem{Merca22} Merca, M.: \emph{Families of Ramanujan-type congruences modulo $4$ for the number of divisors}, submitted.

\bibitem{Mordell} Mordell, L.J.: \emph{On Mr. Ramanujan's empirical expansions of modular functions},
Proc. Cambridge Phil. Soc. \textbf{19} (1917), 117-124.

\bibitem{Ramanujan1} Ramanujan, S.: \emph{Some properties of $p(n)$, the number of partitions of n}. Proceedings of the Cambridge Philosophical Society, 19 (1919), 207--210

\bibitem{Ramanujan2} Ramanujan, S.: \emph{Congruence properties of partitions}. Proceedings of the London Mathematical Society 18 (1920), xix.

\bibitem{Ramanujan3} Ramanujan, S.: \emph{Congruence properties of partitions}. Mathematische Zeitschrift, 9 (1921), 147-153.

\bibitem{Swinnterton-Dyer} Swinnerton-Dyer, H.P.F.: {\it On $\ell$-Adic Representations and Congruences for Coefficients of Modular Forms}, in {\it Modular Functions of One Variable III}, Springer (1973), 1-55.


\end{thebibliography}
\end{document}